\newcommand{\R}{\mathbb{R}}
\newcommand{\Z}{\mathbb{Z}}
\newcommand{\N}{\mathbb{N}}
\newcommand{\B}{\mathbb{B}}
\renewcommand{\le}{\leqslant}
\renewcommand{\ge}{\geqslant}
\theoremstyle{plain}
\newtheorem{thm}{Theorem}[section]
\newtheorem{lem}[thm]{Lemma}
\newtheorem{thmR}{Theorem}
\newtheorem{quR}{Question}
\newtheorem*{rep@theorem}{\rep@title}
\newcommand{\newreptheorem}[2]{%
\newenvironment{rep#1}[1]{%
 \def\rep@title{#2 \ref{##1}}%
 \begin{rep@theorem}}%
 {\end{rep@theorem}}}
\newtheorem*{thm*}{Theorem}
\newtheorem*{lem*}{Lemma}
\newtheorem*{prop*}{Proposition}
\newtheorem*{cor*}{Corollary}
\newtheorem*{qu*}{Question}
\newtheorem*{dt*}{Definition and Theorem}
\newtheorem*{not*}{Notation}
\newtheorem*{exmp*}{Example}
\newtheorem*{exmps*}{Examples}
\newtheorem*{dprop*}{Definition and Proposition}
\newtheorem*{conj*}{Conjecture}
\theoremstyle{definition}
\newtheorem{defn}[thm]{Definition}
\newtheorem*{defn*}{Definition}
\theoremstyle{plain}
\newtheorem{rem}[thm]{Remark}
\newtheorem*{rem*}{Remark}
\DeclareMathOperator\Supp{supp}
\DeclareMathOperator\dc{dc}
\DeclareMathOperator\ord{ord}
\DeclareMathOperator\Exp{Exp}
\DeclarePairedDelimiter\floor{\lfloor}{\rfloor}
\DeclarePairedDelimiter\ceil{\lceil}{\rceil}
\begin{document}
\title{The degree of commutativity and lamplighter groups}
\author{Charles Garnet Cox}
\address{Mathematical Sciences, University of Southampton, SO17 1BJ, UK}
\email{cpgcox@gmail.com}
\thanks{}

\subjclass[2010]{20P05}

\keywords{Wreath products, lamplighter group, degree of commutativity, exponential growth}

\date{August 23, 2018}
\begin{abstract}
The degree of commutativity of a group $G$ measures the probability of choosing two elements in $G$ which commute. There are many results studying this for finite groups. In \cite{dcA}, this was generalised to infinite groups. In this note, we compute the degree of commutativity for wreath products of the form $\Z\wr \Z$ and $F\wr \Z$ where $F$ is any finite group.
\end{abstract}
\maketitle
\section{Introduction}
Let $F$ be a finite group. Then the degree of commutativity of $F$, denoted $\dc(F)$, is the probability of choosing two elements in $F$ which commute i.e.\
\[\dc(F):=\frac{|\{(a, b) \in F^2 :  ab=ba\}|}{|F|^2}.\]

This definition was generalised to infinite groups in \cite{dcA} in the following way. Let $G$ be a finitely generated group and $S$ be a finite generating set for $G$. Let $|g|_S$ denote the length of $g$ with respect to the generating set $S$ i.e.\ the infimum of all word lengths of words in $S$ which represent $g$. For any $n \in \N$, let the ball of radius $n$ in the Cayley graph of $G$ with respect to the generating set $S$ be denoted by $\B_S(n)$. Thus $\B_S(n)=\{g \in G : |g|_S\le n\}$. Then the degree of commutativity of $G$ with respect to $S$, as defined in \cite{dcA}, is
\begin{align}\label{dcdefn}
\limsup_{n\rightarrow\infty}\frac{|\{(a, b) \in \B_S(n)^2 :  ab=ba\}|}{|\B_S(n)|^2}
\end{align}
and is denote by $\dc_S(G)$. They also pose an intriguing conjecture.
\begin{conj*}\cite[Conj. 1.6]{dcA} Let $G$ be a finitely generated group, and let $S$ be a finite generating set
for $G$. Then: (i) $\dc_S(G)>0$ if and only if $G$ is virtually abelian; and (ii) $\dc_S(G) > 5/8$
if and only if $G$ is abelian.
\end{conj*}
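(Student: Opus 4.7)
The conjecture splits into four implications, two of them essentially immediate and two genuinely hard, and my plan is to attack them separately. The easy direction of (ii) is that an abelian $G$ has every pair commuting, so $\dc_S(G) = 1 > 5/8$. The easy direction of (i) is that if $G$ contains an abelian subgroup $A$ of index $k$, then a short coset argument (writing each $g \in \B_S(n)$ as $hg_i$ for a fixed transversal) gives $|A \cap \B_S(n+C)| \ge |\B_S(n)|/k$ with $C$ the maximal word length of a coset representative; since all pairs in $A$ commute and $|\B_S(n+C)|/|\B_S(n)|$ is bounded, this already forces $\dc_S(G) \ge c > 0$. The only mild subtlety is that $\dc_S$ is a $\limsup$, which only makes the bound easier to verify.

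The forward direction of (ii), that $\dc_S(G) > 5/8$ forces $G$ abelian, is a Gustafson-style statement. In the finite case one fixes $g \notin Z(G)$, notes $[G : C_G(g)] \ge 2$, and uses that $G/Z(G)$ cannot be cyclic (so $|Z(G)| \le |G|/4$) to bound the count of commuting pairs by $5|G|^2/8$. The plan in the infinite setting is to execute the same count on $\B_S(n)$ along a subsequence realising the $\limsup$. The technical issue is that centralizer cosets are not honestly contained in the ball, but by enlarging to $\B_S(n+C)$ for a constant $C$ depending only on the chosen witnesses, and exploiting that $|\B_S(n+C)|/|\B_S(n)|$ remains bounded on the witnessing subsequence, the finite-group argument should transfer with an $o(1)$ correction.

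The forward direction of (i), that $\dc_S(G) > 0$ forces $G$ virtually abelian, is the main obstacle; it is in fact open in the stated generality. My first attempt would be to extract from the positivity of $\dc_S(G)$ a set of positive density in $\B_S(n)$ consisting of FC-elements, that is, elements whose centralizer has finite index, and then invoke a structure result pushing $G$ itself to be virtually FC and hence virtually abelian. Because I expect this bridge to be unavailable in full, the pragmatic plan (and presumably the route of this paper) is to verify the predicted $\dc_S(G) = 0$ on concrete non-virtually-abelian examples. Wreath products $\Z \wr \Z$ and $F \wr \Z$ are a natural first test: elements admit a transparent normal form, the balls grow exponentially, and commuting is controlled by easily described centralizers in the base group, so with careful bookkeeping one should be able to show that the count of commuting pairs grows strictly more slowly than $|\B_S(n)|^2$.
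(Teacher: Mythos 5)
The statement you are proving is not a theorem of this paper at all: it is the open conjecture of \cite{dcA}, quoted verbatim, and the paper neither proves it nor claims to. Its actual content is to verify the prediction $\dc_S(G)=0$ on concrete non-virtually-abelian examples, namely $C\wr\Z$ and $F\wr\Z$ with specific generating sets. Your closing paragraph therefore matches the paper's stance and strategy in spirit, but note that the paper does not estimate the number of commuting pairs directly: it invokes \cite[Lem.~3.1]{dcA}, reducing everything to two statements, (a) the base $A$ is $S$-negligible, proved by putting base elements into the normal form $w^{(0)}t^{-1}w^{(1)}\cdots w^{(k)}t^{k}$ and bounding their number by counting (weak) compositions against the known exponential growth rate of the whole group, and (b) for $g$ outside the base the centralizer $C_G(g)$ is cyclic and translation lengths off the base are bounded below by $1$, so these centralizers meet $\B_S(n)$ in at most linearly many elements, uniformly in $g$. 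Your easy directions are correct: abelian gives $\dc_S=1$, and for virtually abelian the coset argument does give the abelian subgroup positive lower density in balls, hence $\dc_S(G)>0$ (such groups have polynomial growth, so the bounded-ratio step is harmless there).

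The genuine gap is in your forward direction of (ii). The claim that Gustafson's count transfers ``with an $o(1)$ correction'' fails for groups of exponential growth: there $|\B_S(n+C)|/|\B_S(n)|$ is bounded, but bounded away from $1$, and it need not tend to $1$ along the subsequence realising the $\limsup$ in (\ref{dcdefn}). So the index-$2$ bound for $C_G(g)$ and the index-$4$ bound for $Z(G)$ each pick up a multiplicative constant strictly greater than $1$, and the resulting upper bound on $\dc_S(G)$ exceeds $5/8$, proving nothing. There is a second uniformity problem: the constant $C$ is the word length of a witness element failing to commute with $g$ (or of coset representatives for $C_G(g)$), and this depends on $g\in\B_S(n)$ without any a priori bound as $n\to\infty$. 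These two issues are exactly why \cite{dcA} establishes the $5/8$ statement only under growth or hyperbolicity hypotheses, where one can pass to radii along which the ball ratios tend to $1$ or control centralizers geometrically; in full generality part (ii), like part (i), remains open, so no proposal along these lines can close the conjecture.
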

They verify this conjecture for hyperbolic groups and groups of polynomial growth (see \cite{introtogrowth} for an introduction to the growth of groups). In this note we will investigate the conjecture for groups which are wreath products.

Perhaps the best known examples of infinite wreath products are the lamplighter groups $C\wr \Z$ where $C$ is cyclic. Such groups are sensible to investigate with respect to the conjecture since they have exponential growth and yet all elements in the base of $C\wr \Z$ commute. We obtain the following.

\begin{thmR} \label{mainthm} Let $G=C\wr\Z$ where $C$ is a non-trivial cyclic group. Then there is a generating set $S$ of $G$ such that $\dc_S(G)=0$.
\end{thmR}

This work generalises to allow us to replace `cyclic' with `finite'.
\begin{thmR}\label{mainthm2}Let $G:=F\wr \Z$ where $F$ is a non-trivial finite group. Then there is a generating set $S$ of $G$ such that $\dc_S(G)=0$.\end{thmR}

Note that the groups of Theorem \ref{mainthm2} include the first known examples of non-residually finite groups with degree of commutativity 0, since it is currently open as to whether there exists a non-residually finite hyperbolic group. 

\begin{rem*} In the case where $G$ is finite, it is well known that
 \[\dc(G)=\frac{\#\;\text{conjugacy classes of}\;G}{|G|}.\]
 One could therefore define the degree of commutativity for any finitely generated infinite group with respect to a finite generating set $S$ to be
 \[\limsup_{n\rightarrow \infty}\frac{\#\;\text{conjugacy classes intersecting}\;\B_S(n)}{|\B_S(n)|}.\]
 Such a limit may not be a real limit. Note that this definition includes the conjugacy growth function of $G$, which was introduced in \cite{introtoconjgrowth} and studied, for example, in \cite{sapirconjgrowth} and \cite{osinconjgrowth}.
\end{rem*}
Two questions then present themselves.
\begin{quR} With this definition for degree of commutativity, does the conjecture above (from \cite{dcA}) hold?
\end{quR}
\begin{quR} Does this definition for the degree of commutativity coincide with (\ref{dcdefn}) above?
\end{quR}
The author is unaware of such questions being posed before, and these questions are not discussed further in this note.

\vspace{0.25cm}
\noindent\textbf{Acknowledgements.} This work would not have been completed without the guidance of my PhD supervisor, Armando Martino. I also thank the other authors of \cite{dcA} for a paper filled with so many ideas. Finally, I thank the referee for their helpful comments, and for pointing out \cite{lamplighternormalform} which streamlines the main arguments.
\vspace{0.25cm}

We now introduce wreath products from an algebraic viewpoint, but will provide intuition (using permutations) below.
\begin{defn*} Given groups $G$ and $H$, the \emph{unrestricted wreath product} of $G$ and $H$ has elements consisting of an element $h \in H$ and a function $f: H\rightarrow G$. Let $B'$ be the set of all such functions. If $f_1, f_2 \in B'$, then $(f_1\times f_2)(h):=f_1(h)\cdot f_2(h)$
for all $h \in H$, where $\cdot$ denotes the binary operation of $G$. Moreover if $k \in H$ then $k^{-1}(f(h))k:=f(hk^{-1})$ for all $h \in H$. This is equal to the semidirect product $B'\rtimes H$. The \emph{restricted wreath product}, denoted $G\wr H$, is defined analogously as the semidirect product $B\rtimes H$ where $H$ is the \emph{head} of $G\wr H$ and $B$, the \emph{base} of $G\wr H$, is the subgroup of $B'$ consisting of functions with finite support i.e.\ functions $f \in B'$ such that $f(h)\ne1$ for only finitely many $h$. Since the base is a direct sum of $|H|$ copies of $G$, for any $h \in H$ let $G_h$ denote the copy of $G$ corresponding to $h$.
\end{defn*}
It may be useful to provide some of the intuition used when thinking about lamplighter groups i.e.\ groups of the form $C\wr \Z$ where $C$ is cyclic. Each of these groups acts naturally on the corresponding set $C\times \Z$. We shall picture $C$ as addition modulo $n$ if $|C|=n$ and as $\Z$ otherwise. Hence $C=\{0, 1, \ldots, n-1\}$ or $C=\Z$. A well used generating set is $\{a_0, t\}$ where $\Supp(a_0)=\{(0,0), (1,0), \ldots, (n-1, 0)\}$ and $\Supp(t)=C\times \Z$ with $t:(m,n)\rightarrow (m,n+1)$ for all $m \in C$ and $n \in \Z$. In the case where $|C|=2$, the base of $C\wr \Z$ can be thought of as a countable collection of street lamps, with each lamp having an `off' or `on' setting. If $2<|C|<\infty$, then we can consider each `lamp' to have a finite number of settings (possibly corresponding to different levels of brightness). In the case of $\Z\wr \Z$, the base can be thought of as lamps, where each lamp has an associated `voltage' which takes a value in $\Z$. Although this intuition will not be taken any further, it can also be seen to apply to subgroups of $\R \wr \R$.

\begin{rem*} Throughout this paper we work with fixed generating sets. This is because at the present it is unknown whether a change of generating set affects $\dc$ (as defined in (\ref{dcdefn})) and the negligibility of a set (as defined in Lemma \ref{mainlem} below). For a group $G$ finitely generated by $X$ and $Y$ we can say that the metrics on the Cayley graphs produced by $X$ and $Y$ are Bi-Lipschitz equivalent. This also means that there is a constant $d\in \N$ such that $|\B_Y(n/d)|\le|\B_X(n)|\le|\B_Y(dn)|$ for all $n\in \N$.
\end{rem*}

\section{Proving Theorem \ref{mainthm}}
The key result we shall draw upon is the following. For the group $G=H\wr \Z$ we shall use the base of $H\wr \Z$ as the set $N$.
\begin{lem}\cite[Lem. 3.1]{dcA}\label{mainlem} Let $G$ be a finitely generated group, and let $S$ be a finite generating system
for $G$. Suppose that there exists a subset $N \subseteq G$ satisfying the following conditions:
\begin{enumerate}[i)]
 \item $N$ is $S$-negligible, i.e.\, $\lim_{n\rightarrow\infty}\frac{|N \cap\B_S(n)|}{|\B_S(n)|}=0$;
 \item $\lim_{n\rightarrow\infty}\frac{|C_G(g) \cap \B_S(n)|}{|\B_S(n)|}=0$ uniformly in $g \in G \setminus N$.
\end{enumerate}
Then, $\dc_S(G) = 0$.
\end{lem}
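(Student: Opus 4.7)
The plan is to estimate the count of commuting pairs in $\B_S(n)^2$ by fixing the first coordinate and using the centralizer of that element to count the admissible second coordinates. Concretely, for each $n$ write
\[
\bigl|\{(a,b) \in \B_S(n)^2 : ab=ba\}\bigr| \;=\; \sum_{a \in \B_S(n)} |C_G(a) \cap \B_S(n)|,
\]
and split the sum according to whether $a \in N$ or $a \in G \setminus N$.

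For the $a \in N \cap \B_S(n)$ part, I would use the trivial bound $|C_G(a) \cap \B_S(n)| \le |\B_S(n)|$, so that this piece contributes at most $|N \cap \B_S(n)| \cdot |\B_S(n)|$. Dividing by $|\B_S(n)|^2$, condition (i) says this tends to $0$. For the $a \in (G\setminus N) \cap \B_S(n)$ part, the uniformity in condition (ii) is exactly what is needed: given $\varepsilon > 0$, pick $n_0$ such that $|C_G(g) \cap \B_S(n)| \le \varepsilon |\B_S(n)|$ holds simultaneously for every $g \in G \setminus N$ and every $n \ge n_0$. Then for $n \ge n_0$ this piece contributes at most $|\B_S(n)| \cdot \varepsilon |\B_S(n)| = \varepsilon |\B_S(n)|^2$.

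Combining the two estimates and dividing by $|\B_S(n)|^2$ gives, for $n \ge n_0$,
\[
\frac{|\{(a,b) \in \B_S(n)^2 : ab=ba\}|}{|\B_S(n)|^2} \;\le\; \frac{|N \cap \B_S(n)|}{|\B_S(n)|} + \varepsilon.
\]
Taking $\limsup_{n \to \infty}$ and applying (i) yields a bound of $\varepsilon$, and since $\varepsilon > 0$ was arbitrary, $\dc_S(G) = 0$.

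There is no real obstacle: the argument is a direct $\varepsilon$-estimate. The only point that must not be glossed over is that the uniformity in (ii) is essential, since a merely pointwise statement would not allow one to bound the sum $\sum_{a \in (G \setminus N) \cap \B_S(n)} |C_G(a) \cap \B_S(n)|$ by $\varepsilon|\B_S(n)|^2$ for a single threshold $n_0$.
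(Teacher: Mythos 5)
Your proof is correct. Note that the paper under review does not actually prove this lemma --- it is quoted from \cite{dcA} and used as a black box --- so there is no in-text proof to compare against; but your argument is the natural (and essentially the only) one: decompose the commuting-pair count as $\sum_{a\in\B_S(n)}|C_G(a)\cap\B_S(n)|$, split by $a\in N$ versus $a\notin N$, use the trivial bound on the first piece, and use the uniform convergence in (ii) on the second. You also correctly flag that the uniformity in (ii) is what lets a single $n_0$ work across all $a\in G\setminus N$; a merely pointwise version would not suffice. No gaps.
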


\begin{rem*}
Throughout we will restrict ourselves to generating sets which are the union of a generator of $\Z$ and a generating set for $H_i$ for some fixed $i \in \Z$.
\end{rem*}

\subsection{Proving that groups $C\wr\Z$ satisfy (ii) of Lemma \ref{mainlem}}\label{conditionii}
This is the simpler of the two conditions to prove for such groups. We first introduce the translation length of a group. For more discussions on these, see \cite{connertranslationnumbers} and the references therein.
\begin{defn} Let $G$ be a finitely generated group with finite generating set $S$ and let $g \in G$. Then $\tau_S(g):=\limsup_{n\rightarrow \infty}\frac{|g^n|_{_S}}{n}$ is the \emph{translation length} of $g$. Let $F(G)$ denote the set of non-torsion elements in $G$. If there is a finite generating set $S'$ of $G$ such that $\{\tau_{S'}(g):g \in F(G)\}$ is uniformly bounded away from 0, then we say that $G$ is \emph{translation discrete}. If a group is translation discrete with respect to one finite generating set, it is translation discrete with respect to all generating sets (see \cite[Lem. 2.6.1]{translationdiscreteproof}).
\end{defn}
Note that $|g^n|_S/n\ge \tau_S(g)$ for all $n>0$. We shall use the following.
\begin{lem} Let $G$ be finitely generated, $S$ a finite generating set for $G$, and $|\B_S(n)|\ge f(n)$ for all $n \in \N$, where $f$ is a polynomial of degree 2. Let $N\subseteq G$. If (i) $C_G(g)$ is cyclic for all $g \in G\setminus N$; and (ii) the translation lengths of the elements in $T:=\{h \in F(G)\mid h\in C_G(g)$ for some $g\in G\setminus N\}$ are uniformly bounded away from 0, then $\lim_{n\rightarrow\infty}\frac{|C_G(g) \cap \B_S(n)|}{|\B_S(n)|}=0$ uniformly in $g \in G \setminus N$.
\end{lem}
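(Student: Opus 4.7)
The idea is to turn the uniform lower bound on translation lengths into a uniform lower bound on $|h^k|_S$, so that any cyclic centralizer has only linearly many elements in $\B_S(n)$; then dividing by the (at least quadratic) lower bound $f(n)$ for $|\B_S(n)|$ forces the ratio to $0$.

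First I would record the standard fact that $n \mapsto |h^n|_S$ is subadditive, so by Fekete's lemma
\[
\tau_S(h)=\inf_{n\ge 1}\frac{|h^n|_S}{n},
\]
whence $|h^k|_S \ge |k|\,\tau_S(h)$ for every $k\in\Z$ (assuming as usual that $S=S^{-1}$, so $|h^{-n}|_S=|h^n|_S$). Hypothesis (ii) supplies a constant $\tau_0>0$ with $\tau_S(h)\ge \tau_0$ for every non-torsion $h\in G$.

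Now fix $g\in G\setminus N$ and let $h_g$ be a generator of the cyclic group $C_G(g)$. In the principal case where $h_g$ has infinite order, $h_g$ is non-torsion with $\tau_S(h_g)\ge \tau_0$, and the injectivity of $k\mapsto h_g^k$ yields
\[
|C_G(g)\cap \B_S(n)| = \#\{k\in\Z:|h_g^k|_S\le n\} \le 2\lfloor n/\tau_0\rfloor+1.
\]
The essential feature is that this bound depends on $g$ only through the global constant $\tau_0$; dividing by $|\B_S(n)|\ge f(n)$ therefore yields a function of $n$ alone that tends to $0$, which is precisely the required uniform convergence. In the remaining case $h_g$ is torsion, $C_G(g)$ is a finite cyclic group and $|C_G(g)\cap \B_S(n)| \le |C_G(g)|$ is bounded in $n$; the ratio again tends to $0$, and uniformity in $g$ is inherited from structural information about torsion in the groups of interest (e.g.\ in $F\wr \Z$ with $F$ finite, the torsion elements have uniformly bounded order).

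The only real subtlety is bookkeeping: the conversion from $\tau_S(h_g)$ to the uniform constant $\tau_0$ must be performed before dividing by $|\B_S(n)|$, as it is precisely this step that upgrades pointwise convergence to uniform convergence and is the one place where hypothesis (ii) is essential rather than decorative.
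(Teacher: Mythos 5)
Your argument is essentially the paper's own proof: hypothesis (ii) gives a uniform constant ($\tau_S\ge 1/\lambda$), the inequality $|h^k|_S\ge|k|\tau_S(h)$ gives the linear bound $|C_G(g)\cap\B_S(n)|\le 2\lambda n+1$ depending only on the global constant, and dividing by the at-least-quadratic $|\B_S(n)|$ gives the uniform limit $0$; your Fekete/subadditivity remark just makes explicit a step the paper uses silently. Your separate torsion-generator case is the only divergence: the paper simply reads (ii) as applying to the generators of the centralizers (in the application these are infinite cyclic, generated by non-torsion elements), and your appeal there to ``structural information about the groups of interest'' lies outside the lemma's hypotheses, but it is harmless for the use made of the lemma.
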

\begin{proof} 
This argument can be found within the proof of \cite[Thm. 1.7]{dcA}. From (ii), there exists a constant $\lambda \in \N$ such that $\tau_S(h)\ge 1/\lambda$ for all $h \in T$.

Let $g \in G\setminus N$. By (i), $C_G(g)=\langle h\rangle$ for some $h \in G$. We now consider how $C_G(g)\cap \B_S(n)$ grows with respect to $n$. If $h$ is torsion then there is nothing to prove. Otherwise $h \in T$. Then $h^k \in C_G(g)\cap \B_S(n)$ means $|h^k|_{_S}\le n$ so that $|h^k|_{_S}\ge |k|\tau_S(h)\ge |k|/\lambda$. Thus $|k|\le \lambda n$ and $|C_G(g)\cap \B_S(n)|\le 2\lambda n+1$. Finally, since $\B_S(n)$ grows faster than any linear function, the claim follows.
\end{proof}
We must therefore show the two conditions in this lemma are satisfied. Note that they are independent of the choice of finite generating set used.
\begin{defn} Let $A$ denote the base of $G=H\wr \Z$ where $H$ is a finitely generated group. If $g \in A\setminus \{1\}$, then $g=\prod_{i \in I}g_i$ where $I$ is a finite subset of $\Z$ and $g_i \in H_i\setminus\{1\}$ for each $i \in I$. Now $g_{\min}:=\inf\{I\}$ and $g_{\max}:=\sup\{I\}$, the infimum and supremum of $I$, respectively. 
\end{defn}
\begin{lem} Let $G:=H\wr \Z$ and let $A$ denote the base of $G$. If $g \in A$, then $C_G(g)\le A$ (and if $H$ is abelian, then $C_G(g)=A$). If $g \in G\setminus A$, then $C_G(g)$ is cyclic (and $C_G(g)$ contains no non-trivial element of the base).
\end{lem}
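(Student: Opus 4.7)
The plan is to write every element of $G=H\wr\Z$ in the standard form $(f,n)$ with $f\in A$ and $n\in\Z$, and to use the multiplication rule
\[(f_1,n_1)(f_2,n_2)=\bigl(f_1\cdot\sigma^{n_1}(f_2),\,n_1+n_2\bigr),\]
where $\sigma^{n}(f)$ denotes the shifted function $i\mapsto f(i-n)$. The conditions $gh=hg$ will then be read off componentwise, and all arguments will reduce to examining a first-order difference equation in the base coordinates and exploiting that elements of $A$ have finite support.

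For the first assertion, assume $g=(f,0)\in A\setminus\{1\}$ and let $h=(f',m)$ centralize $g$. Comparing base components of $gh$ and $hg$ yields the identity $f(i)f'(i)=f'(i)\,f(i-m)$ for every $i\in\Z$. I would then specialize to $i=g_{\min}$ (or $g_{\max}$, depending on the sign of $m$): if $m\ne0$, one of the two values $f(i-m)$ or $f(i)$ is forced to be trivial by the minimality/maximality of the support of $f$, which after cancellation gives a contradiction with $i\in\operatorname{supp}(f)$. Hence $m=0$ and $h\in A$. In the abelian case the same identity collapses to a tautology as soon as $m=0$, so every element of $A$ centralizes $g$, giving $C_G(g)=A$.

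For the second assertion, assume $g=(f,m)$ with $m\ne0$; I would consider the projection $\pi\colon G\to\Z$ to the head and aim to prove that $\pi$ restricted to $C_G(g)$ is injective. Since subgroups of $\Z$ are cyclic, this will finish the proof. The kernel of $\pi|_{C_G(g)}$ is $C_G(g)\cap A$, so it suffices to show that if $(f',0)$ commutes with $(f,m)$, then $f'=1$. Computing both sides of the commutation relation gives
\[f'(i)=f(i)\,f'(i-m)\,f(i)^{-1}\quad\text{for all }i\in\Z,\]
which upon iteration $k$ times expresses $f'(i)$ as a conjugate of $f'(i-km)$ by a product of $f$-values. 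Choosing $k$ so large that $i-km$ lies outside the finite support of $f'$, we get $f'(i-km)=1$ and hence $f'(i)=1$, as required.

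The only subtle point is the direction of iteration in the last step: one must make sure that pushing the index by $-km$ (with $m$ of fixed sign) eventually leaves the finite support of $f'$, which it clearly does since $\operatorname{supp}(f')$ is bounded in $\Z$. Everything else is bookkeeping with the semidirect-product multiplication and the finiteness of supports in $A$; no deeper input from the structure of $H$ is needed, which is why the statement works for arbitrary (not necessarily abelian, nor finite) $H$.
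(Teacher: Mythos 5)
Your proposal is correct and follows essentially the same route as the paper: in both cases the heart of the matter is showing $C_G(g)\cap A=\{1\}$ for $g\notin A$ by exploiting the finite support of base elements against the shift by the nonzero head exponent, and then deducing cyclicity from the head coordinate. The only cosmetic differences are that you iterate the commutation relation $f'(i)=f(i)f'(i-m)f(i)^{-1}$ until the index leaves $\Supp(f')$, where the paper compares support minima of $w^{-1}vw$ and $t^kvt^{-k}$ in a single step, and you conclude via injectivity of the projection to $\Z$ (subgroups of $\Z$ being cyclic) where the paper runs a minimal-positive-exponent and division-algorithm argument; you also write out the first claim, which the paper dismisses as clear.
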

\begin{proof} The first claim is clear. For the second, let $g \in G\setminus A$, so that $g=wt^k$ for some $w \in A$ and $k \in \Z\setminus \{0\}$. Now, for any $v \in A$,
\begin{align}\label{eqn1-wreathcentralisers}
 &v^{-1}wt^kv=wt^k\nonumber\\
 \Leftrightarrow&v^{-1}wt^kvt^{-k}=w\nonumber\\
 \Leftrightarrow&t^kvt^{-k}=w^{-1}vw
\end{align}
and so, if $v$ is non-trivial, then $(w^{-1}vw)_{\min}>(t^kvt^{-k})_{\min}$ and so $v \not\in C_G(wt^k)$. 
Now assume that $vt^{\alpha}\in C_G(wt^k)$. If $v't^{\alpha} \in C_G(wt^k)$, then $v't^{\alpha}(vt^{\alpha})^{-1}=v'v^{-1}$ and so by (\ref{eqn1-wreathcentralisers}), $v'v^{-1}=1$ i.e.\ $v'=v$. Thus for each $s \in \Z$ such that $vt^s \in C_G(wt^k)$ there is no $v'\ne v$ such that $v't^s \in C_G(wt^k)$. Now assume that $\alpha$ is the smallest positive integer such that there exists a $v \in A$ with $vt^{\alpha} \in C_G(wt^k)$. If, for some $\beta \in \Z$ there is a $u \in A$ such that $ut^\beta \in C_G(wt^k)$, then, by the division algorithm, $\beta=n\alpha$ for some $n \in \Z$. Thus $ut^\beta=(vt^{\alpha})^n$ since for each $s \in \Z$ there is at most one $\nu \in A$ such that $\nu t^s \in C_G(wt^k)$.
\end{proof}

\begin{lem} Let $G=H\wr \Z$ where $H$ is a finitely generated group and let $A$ denote the base of $G$. Then $\{\tau_S(g) :  g \in G\setminus A\}$ is uniformly bounded away from 0 i.e.\ if $H$ is torsion, then $G$ is translation discrete.
\end{lem}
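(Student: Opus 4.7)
The plan is to exploit the projection homomorphism $\pi : G \to \Z$ onto the head, which has kernel $A$. By the Remark preceding this subsection I may assume $S = \{t\} \cup T$, where $t$ generates the head $\Z$ and $T$ is a finite generating set for a single copy $H_i$ of $H$ inside $A$. Since $T \subseteq \ker \pi$ while $\pi(t^{\pm 1}) = \pm 1$, every $S$-word of length $n$ projects in $\Z$ to an integer of modulus at most $n$. This yields the key inequality
\[
|g|_S \;\ge\; |\pi(g)| \qquad \text{for all } g\in G.
\]

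Next, for an arbitrary $g = wt^k \in G\setminus A$ (so $w\in A$ and $k\in\Z\setminus\{0\}$) I will use that $\pi$ is a homomorphism to obtain $\pi(g^n) = nk$, whence
\[
|g^n|_S \;\ge\; n|k| \;\ge\; n.
\]
Dividing by $n$ and taking the $\limsup$ gives $\tau_S(g) \ge 1$, and since this lower bound is independent of the chosen $g \in G \setminus A$, one obtains the required uniform bound on $\{\tau_S(g) : g \in G\setminus A\}$. Translation discreteness of $G$ with respect to an arbitrary finite generating set then follows from the cited \cite[Lem. 2.6.1]{translationdiscreteproof}.

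I do not anticipate any serious obstacle here: the restriction imposed on $S$ in the Remark forces every generator other than $t^{\pm 1}$ into $\ker \pi$, so the projection converts word length in $G$ into an absolute value in $\Z$. The remaining ingredient, namely $|nk|/n \to |k| \ge 1$ for $k \in \Z\setminus\{0\}$, is trivial. The only thing to be slightly careful about is that the argument genuinely uses the shape of $S$ dictated by the Remark; a generating set containing, say, $t^2$ as a generator would still work (projecting to $\pm 2$ bounds things by $2n$), but a generating set containing $t\cdot s$ for some $s \in H_j$ would require the detour through \cite[Lem. 2.6.1]{translationdiscreteproof} to reduce back to the standard generating set before the estimate is applied.
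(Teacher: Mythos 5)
Your proof is correct and is essentially the same as the paper's: the paper asserts $|g^n|_S \ge |k|n$ directly, and your explicit use of the projection $\pi: G \to \Z$ (with $T \subseteq \ker\pi$, so $|g|_S \ge |\pi(g)|$) is exactly the justification for that inequality. No substantive difference in approach.
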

\begin{proof} Let $S_H$ denote a finite generating set for $H_0$. We work with the generating set $S:=S_H\cup\{t\}$ of $G$.

If $g \in G\setminus A$, then $g=wt^k$ where $w \in A$ and $k \in \Z\setminus\{0\}$. Thus for any $n \in \N$, $|g^n|_{_S}\ge |k|n\ge n$ and so $\tau_S(g)\ge 1$. 
\end{proof}
Let $H$ be finitely generated with $\tau_S(H)\subseteq\N\cup \{0\}$ for some finite generating set $S$. Then one can prove, with $S'$ as a finite generating set consisting of the generating set $S$ for $H_0$ and a generator of the head of $H\wr \Z$, that $\tau_{S'}(H\wr \Z)=\N\cup\{0\}$ and that $\tau_{S'}^{-1}(0)$ is equal to $\{w \in \bigoplus_{i \in I}H_i\mid I$ is a finite subset of $\Z$ and $w$ is torsion$\}$. Moreover, if we drop the condition on the translation lengths of $H$ and let $A$ denote the base of $H\wr \Z$, then $\tau_{S'}(H\wr\Z\setminus A)=\N$. 

\subsection{Proving that groups $C\wr\Z$ satisfy (i) of Lemma \ref{mainlem}}\label{conditioni}
For a group $G$ with finite generating set $S$, the exponential growth rate of $G$ with respect to $S$ is
\begin{align*}
\Exp_S(G) := \lim_{n\rightarrow\infty} \sqrt[n]{ |\B_S(n)|}.
\end{align*}

\begin{defn} A group $G$ with finite generating set $S$ is said to have exponential growth if $\Exp_S(G) > 1$, and subexponential growth if $\Exp_S(G) = 1$. This does not depend on the choice of finite generating set.
\end{defn}

The author is unaware of how to show that the negligibility of a set is independent of the generating set used. When working with groups of exponential growth, the `density' of a set $A\subset G$ can depend on the choice of generating set \cite[Example 1.5]{density}. Here, density of a subset $A$ of $G=\langle S\rangle$ is thought of as the number
\[\limsup_{n\rightarrow\infty}\frac{|A \cap\B_S(n)|}{|\B_S(n)|}\]
so that a set is negligible if and only if it has density 0. Note that if the negligibility of a set is independent of the finite generating set used, then the results that follow would apply to any finite generating set.

\begin{rem*}
We shall work with the generating set $\langle a_0, t\rangle$ where $a_0$ is a generator of $C_0$ and $t$ is a generator of the head. The arguments also work for $C_i$ for any $i \in \Z$.
\end{rem*}
The following is a simplification of \cite[Prop. 3.8]{lamplighternormalform}, and differs by applying only to elements of the base rather than all elements of the group.

\begin{lem}\label{formofwords} Let $H$ be finitely generated by $X$, let $A$ denote the base of $H\wr \Z$, and let $g\in A\setminus\{1\}$. Then $g=\prod_{i\in I}g_i$ where $I=\{i_1, \ldots, i_k, j_1, \ldots, j_l\}\subset \Z$, $g_i\ne 1$ for each $i \in I$, and w.l.o.g. $j_l<\ldots<j_1<0\le i_1<i_2<\ldots <i_k$. For every $i \in I$, let $w^{(i)}$ be a representative of $t^ig_it^{-i}$ of minimal length. Then
\begin{align}\label{theword}
t^{-j_l}w^{(j_l)}t^{j_l-j_{l-1}}w^{(j_{l-1})}t^{j_{l-1}-j_{l-2}}\ldots w^{(j_1)}t^{j_1-i_k}w^{(i_k)}t^{i_k-i_{k-1}}w^{(i_{k-1})}\ldots w^{(i_1)}t^{i_1}
\end{align}
is a word in $X_0\cup\{t\}$ that obtains the minimal length amongst all representatives of $g$ (called the left first form of $g$).
\end{lem}

If the $g$ in Lemma \ref{formofwords} had $g_{\min}<0$, consider $t^{j_l}gt^{-j_l}$. We note that $|t^{j_l}gt^{-j_l}|_{X_0\cup\{t\}}\\\le |g|_{X_0\cup\{t\}}$ by conjugating (\ref{theword}) by $t^{j_l}$.

\begin{rem}\label{onlypositivelamps} Let $A_s:=\{g \in A :  g_{\min}\ge s\}$. By the previous paragraph, for any $s \in \Z\setminus \N$, $|\B_S(n)\cap (A_s\setminus A_{s+1})|\le|\B_S(n)\cap A_0|$. Combining this with the fact that for any $s\le -n$, $|\B_S(n)\cap (A_s\setminus A_{s+1})|=0$, we see that $|\B_S(n)\cap A|\le (n+1)|\B_S(n)\cap A_0|$.
\end{rem}

We are now ready to prove the first case of Theorem 1.

\begin{lem} \label{firstcase} Let $G=C_2\wr \Z$, $S=\{a_0, t\}$, and $A$ denote the base of $G$. Then $A$ is $S$-negligible.
\end{lem}
\begin{proof} Fix an $n \in \N$. Our aim is to produce a bound for $|\B_S(n)\cap A|$. From \cite{growthofwreathproducts}, the exponential growth rate of $G$ with respect to $S$ is $\frac{1+\sqrt5}{2}$. By Remark \ref{onlypositivelamps}, it is sufficient to bound the exponential growth rate of $|\B_S(n)\cap A_0|$ (elements of the base with $g_{\min}\ge0$). Moreover we may work with elements of exactly length $n$, since $\B_S(n)$ and $\B_S(n)\setminus\B_S(n-1)$ have the same exponential growth rate. Therefore we will show that $|(\B_S(n)\setminus \B_S(n-1))\cap A_0|$ has exponential growth rate bounded by $\sqrt2$ (since $\sqrt2<\frac{1+\sqrt5}{2}$).

Let $g \in A$, $|g|_S=n$, and $g_{\min}\ge0$. For all $i>\floor*{n/2}$ the $g_i$ are trivial by (\ref{theword}). Thus our conditions on $g$ imply that $g=\prod_{i\in I}a_i$ and $I\subseteq\{0,\ldots, \floor*{n/2}\}$. By Lemma \ref{formofwords}, $g$ can be represented by a word of the form
\begin{align}\label{elementsofthebase} 
t^{-k}w_0tw_1t\ldots tw_k
\end{align}
for some $k\in \{0, 1\ldots, \floor*{n/2}\}$ and words $w_i$ which, for each $i\in\{0,\ldots,k\}$ are either the empty word or are equal to $a_0$. Note that, since $|g|_S=n$,
\begin{align*}
\sum\limits_{i=0}^{k}|w_i|_{\{a_0\}}= n-2k.
\end{align*}
Thus, for each $k$, there are at most $2^{k+1}$ options for the values of $\{w_i : i=0,1,\ldots, k\}$. Hence the size of $|\B_S(n)\cap A_0|$ is bounded by
\[\sum\limits_{j=0}^{\floor*{n/2}}2^{j+1}\le 4\cdot (\sqrt2)^n\]
and, since $\sqrt2<\frac{1+\sqrt5}{2}$, the base of $C_2\wr \Z$ is negligible.
\end{proof}
In order to prove Theorem 1, all that is required is to generalise the above lemma to the case $G=C\wr\Z$, where $|C|>2$. Our approach will be similar, but the word (\ref{elementsofthebase}) will have $w^{(i)}=a_0^{d_i}$ for some numbers $d_i\in\Z$. In order to produce a bound for the number of such words, we will use known results regarding the number of possible compositions of a number.
\begin{defn} A multiset, denoted $[\ldots]$, is a collection of objects where repeats are allowed e.g.\ $[1,2,2,3,5]$. An ordered multiset, denoted $[\ldots]_{\ord}$, is a multiset with a given ordering. Thus $[1,2,2,3,5]_{\ord}\ne[1,2,3,2,5]_{\ord}$.
\end{defn}

\begin{defn} Let $n \in \N$. Then a composition of $n$ is an ordered collection of natural numbers that sum to $n$. Thus there is a natural correspondence between compositions of $n$ and ordered multisets whose elements lie in $\N$ and sum to $n$. A weak composition of $n$ is an ordered collection of non-negative integers that sum to $n$. There is a natural correspondence between weak compositions of $n$ and ordered multisets whose elements lie in $\N\cup \{0\}$ and sum to $n$.
\end{defn}
The following are well known. See, for example, \cite{Riordan}.
\begin{lem} 
Let $n \in \N$. Then the number of compositions of $n$ is $2^{n-1}$. 
\end{lem}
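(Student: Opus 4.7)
The plan is to give a direct bijective proof. I would associate every composition $n = a_1 + a_2 + \cdots + a_k$ with a subset $D \subseteq \{1, 2, \ldots, n-1\}$, where $D$ is the set of partial sums $\{a_1, a_1 + a_2, \ldots, a_1 + \cdots + a_{k-1}\}$. Geometrically, one writes $n$ as a row of $n$ ones separated by $n-1$ gaps, and each subset $D$ of these gaps records where a ``+'' is inserted (with the remaining gaps being absorbed into a single summand).

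First I would verify injectivity: a composition is clearly recovered from its set of partial sums by taking consecutive differences (and noting that $a_k = n - \max(D)$, or $a_1 = n$ if $D = \emptyset$). Next I would verify surjectivity: given any subset $D = \{d_1 < d_2 < \cdots < d_{k-1}\}$ of $\{1, \ldots, n-1\}$, setting $a_1 = d_1$, $a_i = d_i - d_{i-1}$ for $2 \le i \le k-1$, and $a_k = n - d_{k-1}$ yields a composition of $n$ whose partial sum set is $D$. Since $|\{1, \ldots, n-1\}|= n-1$, the power set has $2^{n-1}$ elements, and this gives the desired count.

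An alternative I would keep in reserve is induction on $n$. The base case $n=1$ is trivial. For the inductive step, condition on the first part $a_1 = j \in \{1, \ldots, n\}$; the remaining parts form a composition of $n-j$ (where the empty composition of $0$ is counted once), yielding
\begin{equation*}
1 + \sum_{j=1}^{n-1} 2^{n-j-1} = 1 + (2^{n-1} - 1) = 2^{n-1}.
\end{equation*}
Neither route presents any real obstacle; the bijection is the shorter and more illuminating of the two, so I would present it as the main argument.
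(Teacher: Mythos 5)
Your bijection via partial sums (equivalently, choosing which of the $n-1$ gaps between $n$ ones receive a separator) is correct and is essentially the same argument as the paper's, which marks each of the $n-1$ boxes between the ones with either a plus or a comma. The induction alternative is fine but unnecessary.
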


\begin{lem}\label{combinatorialresult} Let $n \in \N$. Then the number of weak compositions of $n$ into exactly $k$ parts is give by the binomial coefficient
\[\begin{pmatrix}
   n+k-1\\k-1
  \end{pmatrix}.\]
\end{lem}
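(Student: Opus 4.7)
The plan is to prove this via the classical ``stars and bars'' bijection. I would set up a bijection between weak compositions of $n$ into exactly $k$ parts and binary strings of length $n+k-1$ consisting of $n$ symbols of one type (``stars'') and $k-1$ symbols of another type (``bars''). Given a weak composition $(a_1, a_2, \ldots, a_k)$ with $a_i \ge 0$ and $\sum_{i=1}^k a_i = n$, form the string consisting of $a_1$ stars followed by a bar, then $a_2$ stars followed by a bar, and so on, ending with $a_k$ stars. The total number of stars is $n$ and the total number of bars is $k-1$, so the string has length $n+k-1$.

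Next I would check that this map is a bijection. Injectivity is immediate: two distinct tuples $(a_1,\ldots,a_k)\neq(b_1,\ldots,b_k)$ give strings that differ in the position where the tuples first disagree. For surjectivity, given any string of $n$ stars and $k-1$ bars, read off the part sizes by counting the number of stars in each of the $k$ blocks cut out by the bars (a block may be empty, corresponding to a part equal to $0$, which is exactly what is allowed in a weak composition). These part sizes sum to $n$ since the total number of stars is $n$.

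Finally, I would count the strings of length $n+k-1$ with exactly $k-1$ bars by choosing the positions for the $k-1$ bars among the $n+k-1$ available slots, giving $\binom{n+k-1}{k-1}$. Equivalently, one could choose the $n$ positions for the stars, yielding $\binom{n+k-1}{n}$, the same quantity.

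There is no real obstacle here since this is a standard elementary counting identity; the only care required is in handling the degenerate blocks (empty runs of stars, corresponding to the $a_i=0$ entries that distinguish weak from ordinary compositions) and in consistently matching the bijection with the claimed binomial coefficient $\binom{n+k-1}{k-1}$ rather than $\binom{n+k-1}{n}$, which are of course equal.
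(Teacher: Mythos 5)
Your proof is correct. It is the direct ``stars and bars'' argument: you encode a weak composition $(a_1,\ldots,a_k)$ of $n$ as a string of $n$ stars and $k-1$ bars, verify the encoding is a bijection (with empty blocks handling the parts equal to $0$), and count the strings by choosing positions for the bars, giving $\binom{n+k-1}{k-1}$. The paper takes a slightly different route: it first counts ordinary compositions of $n$ into $k$ parts as $\binom{n-1}{k-1}$ (via the preceding lemma's plus/comma boxes between $n$ ones), and then transfers this to weak compositions by the shift bijection $[m_1,\ldots,m_k]_{\ord}\mapsto[m_1-1,\ldots,m_k-1]_{\ord}$ between compositions of $n+k$ into $k$ parts and weak compositions of $n$ into $k$ parts. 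The two arguments are close cousins, but yours is self-contained and handles the zero parts directly inside the bijection, whereas the paper's leans on the earlier composition count and gets the weak case essentially for free from the $\pm1$ shift; either is a complete and standard proof of the identity.
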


\begin{reptheorem}{mainthm}Let $G=C\wr\Z$ where $C$ is a non-trivial cyclic group. Let $S:=\langle a, t\rangle$ be a generating set for $G$ with $a \in C_i$ (for some $i \in \Z$) and $t \in \Z$, the head of $G$. Then $\dc_S(G)=0$.
\end{reptheorem}

\begin{proof} We will show that the base $A$ of $G$ is negligible in $G$ for the case where $|C|>2$. As with the proof of Lemma \ref{firstcase}, Remark \ref{onlypositivelamps} implies that a bound on the exponential growth rate of $|\B_S(n)\cap A_0|$ (elements of the base with $g_{\min}\ge0$) is sufficient to bound the exponential growth rate of $|\B_S(n)\cap A|$, and both $\B_S(n)$ and $\B_S(n)\setminus\B_S(n-1)$ have the same exponential growth rate. Our aim is therefore to bound the exponential growth rate of $|(\B_S(n)\setminus\B_S(n-1))\cap A_0|$.

Fix an $n \in \N$. Let $g \in A$, $|g|_S=n$, and $g_{\min}\ge0$. By Lemma \ref{formofwords}, there is a word of length $n$ of the form
\begin{align}\label{elementsofthebase2}
t^{-k}w_0tw_1t\ldots tw_k
\end{align}
where $k\in \{0, 1,\ldots, \floor*{n/2}\}$ and for each $i \in \{0,1,\ldots,k\}$ we have that $w_i=a^{d_i}$ for some $d_i \in \Z$. Since $|g|_S=n$,
\begin{align*}
\sum\limits_{i=0}^{k}|w_i|_{\{a\}}= n-2k.
\end{align*}
From \cite{growthofwreathproducts}, the growth of $C\wr \Z$ with our generating set has exponential growth rate bigger than 2 if $|C|\ge3$. 

We now use Lemma \ref{combinatorialresult}. Our aim is to show that $|(\B_S(n)\setminus \B_S(n-1))\cap A_0|$ is bounded by a function with exponential growth rate 2. Fix a $k \in \{0, 1, \ldots, \floor*{\frac{n}{2}}\}$. Each element of $|(\B_S(n)\setminus \B_S(n-1))\cap A_0|$ can be represented by a word of the form (\ref{elementsofthebase2}), where $\sum_{i=0}^k|w_i|=n-2k$. For each $i \in \{0,\ldots,k\}$ we can encode the element $w_i$ using a pair $(u^{(i)}, v^{(i)})$: for $w_i\ge0$ let $u^{(i)}:=d_i$ and $v^{(i)}:=0$ whereas for $w_i<0$ let $u^{(i)}:=0$ and $v^{(i)}:=|d_i|$. Thus, for each $i\in \{0, \ldots, k\}$, we have that $u^{(i)}, v^{(i)} \in \N\cup \{0\}$ and that $u^{(i)}v^{(i)}=0$. Each word is then in bijection with an ordered multiset
\begin{align}\label{elementsofthebase3}
[u^{(0)}, v^{(0)}, u^{(1)}, v^{(1)},\ldots, u^{(k-1)}, v^{(k-1)}, u^{(k)}, v^{(k)}]_{\ord}.
\end{align}
Each such multiset corresponds to a weak composition of $n-2k$ into $2k+2$ parts. Lemma \ref{combinatorialresult} states that there are
\begin{align*}
\begin{pmatrix}
                          n-2k+2k+2-1\\2k+2-1
                         \end{pmatrix}=\begin{pmatrix}
                          n+1\\2k+1
                         \end{pmatrix}
\end{align*}
weak compositions of $n-2k$ into $2k+2$ parts. Now we sum over all viable $k$:
\begin{align*}
\sum\limits_{k=0}^{\floor*{\frac{n}{2}}}\begin{pmatrix}
                                                 n+1\\2k+1
                                                \end{pmatrix}
&\le \sum\limits_{j=0}^{n+1}\begin{pmatrix}
                                                   n+1\\j
                                                  \end{pmatrix}=2^{n+1}.
\end{align*}
Hence $|(\B_S(n)\setminus \B_S(n-1))\cap A|\le (n+1)|(\B_S(n)\setminus \B_S(n-1))\cap A_0|\le (n+1)\cdot2^{n+1}$, and so is negligible in $C\wr \Z$.
\end{proof}

\begin{rem}\label{C2C2} Consider the group $(C_2\times C_2)\wr \Z$ with the generating set $X$ consisting of two non-trivial elements $a, b$ of $(C_2\times C_2)$ and $t'$, a generator of the head. This has the same exponential growth rate (using our generating set $\{a_0, t\}$) as $C_4\wr \Z$. Moreover the count for elements of the base of $C_4\wr \Z$ in the proof of Theorem \ref{mainthm} also applies to $(C_2\times C_2)\wr \Z$ with the generating set $X$. This can be seen by the map of sets defined by $a\mapsto a_0, b\mapsto a_0^{-1}, ab\mapsto a_0^2, t'\mapsto t$. Therefore the base of $(C_2\times C_2)\wr \Z$ is negligible with respect to $X$.\end{rem}

We now generalise the previous proof to apply to $F\wr\Z$, where $F$ is any finite group, with respect to specifically chosen generating sets.

\begin{reptheorem}{mainthm2}
Let $G:=F\wr \Z$ where $F$ is a non-trivial finite group. Then there is a generating set $S$ of $G$ such that $\dc_S(G)=0$.\end{reptheorem}

\begin{proof} Let $|F|=m>1$ and let $A$ denote the base of $G$. Then $A:=\bigoplus_{i \in \Z}F_i$ where $F_i=F$ for each $i \in \Z$. Let $S$ denote the generating set consisting of the non-trivial elements of $F_0$ and a generator $t$ of the head of $G$. From Section \ref{conditionii} we need only show that $A$ is negligible in $G$.

First we produce a lower bound on the growth of $G$. Consider words of the form
\begin{align*}
w_1tw_2tw_3\ldots tw_kt^{\epsilon}
\end{align*}
where $w_i \in S$ for each $i \in \{1,\ldots, k\}$, and $\epsilon \in \{0, 1\}$ depending on whether the word should be of odd or even length respectively. There are $m^k$ such words (since $|S|=m$). We now show that each word represents a distinct element of $G$. If two words have different $t$ exponent sums, then they have different images when we quotient by $A$. Now, by multiplying the elements on the left by $t^{-s}$, where $s$ is the exponent sum of each word, they become elements of $A$. The action of such a word is then clear: if $w_i = t$, then $w_it$ prints `blank' across the next two copies of $F$; whereas if $w_i\in F$, then $w_it$ can be thought of as `printing' the element $w_i$ on the current copy of $F$ and then moving the `head' across once. Thus each word represents a different set of instructions depending on the choices of $\{w_i\;;\;i=1,\ldots,k\}$. Thus $|\B_S(n)|\ge |\B_S(n)\setminus \B_S(n-1)|\ge m^{\ceil*{n/2}}$.

We now produce an upper bound on the exponential growth rate of $A$. As with the previous proof, we produce an upper bound for words $g \in  A\cap(\B_S(n)\setminus\B_S(n-1))$ with $g_{\min}\ge 0$. By Lemma \ref{formofwords}, each such word has a representative of the form
\begin{align}\label{thewordsthm2}
t^{-k}w_0tw_1t\ldots w_{k-1}tw_k
\end{align}
where each $w_i$ is either trivial or in $S\setminus\{t\}$. There must be at least one non-trivial $w_i$ from our hypothesis that $g \in  A\cap(\B_S(n)\setminus\B_S(n-1))$. Therefore if $n=2m$, then $k\le m-1$, and if $n=2m+1$, then $k\le m$. Hence $k\le \floor*{\frac{n-1}{2}}$. Similarly if $n=3m$, then $k\ge m$; if $n=3m+1$, then $k\ge m$; and if $n=3m+2$, then $k\ge m+1$. Hence $\floor*{\frac{n+1}{3}}\le k \le \floor*{\frac{n-1}{2}}$. By the same process the number of $i$ such that $w_i$ is non-trivial must be between 1 and $\floor*{\frac{n-1}{3}}$ . Let $d$ denote the number of non-trivial $w_i$. Then $d\in \{1,\ldots, \floor*{\frac{n-1}{3}}\}$. From the fact we are in $A_0\cap(\B_S(n)\setminus\B_S(n-1))$ and the form of (\ref{thewordsthm2}), we have that $n-2k=d$, that there are at most $\binom{k+1}{d}$ options for the positions of the non-trivial $w_i$, and that there are $m-1$ possibilities for each non-trivial $w_i$. Noting that $k+1\le\floor*{\frac{n+1}{2}}$ allows us to produce an upper bound

\begin{align*}
\sum\limits_{k=\floor*{\frac{n+1}{3}}}^{\floor*{\frac{n-1}{2}}}\begin{pmatrix}\floor*{\frac{n+1}{2}}\\n-2k\end{pmatrix}(m-1)^{n-2k}
&<(m-1)^{\ceil*{\frac{n}{3}}}\sum\limits_{k=0}^{\floor*{\frac{n-1}{2}}}\begin{pmatrix}\floor*{\frac{n+1}{2}}\\k\end{pmatrix}
\\&<(m-1)^{\ceil*{\frac{n}{3}}}\cdot 2^{\floor*{\frac{n+1}{2}}}
\\&<(m-1)\cdot(m-1)^{\frac{n}{3}}\cdot 2\cdot 2^{\frac{n}{2}}
\\&<2\cdot(m-1)\cdot\left(2(m-1)^{\frac23}\right)^{\frac{n}{2}}.
\end{align*}
If $m\ge6$, then $m>2(m-1)^{2/3}$, and the exponential growth rate of $\B_S(n)$ is greater than $A_0\cap(\B_S(n)\setminus\B_S(n-1))$. This proves $\dc_S(F\wr\Z)=0$ for $|F|\ge 6$. If $|F|<6$, then $F$ is either cyclic, and so dealt with by Theorem \ref{mainthm}, or $F$ is $C_2\times C_2$, and so dealt with by Remark \ref{C2C2}.
\end{proof}

We end by posing two questions, both of which could represent future work. These seem natural in the context of Theorem \ref{mainthm} and Theorem \ref{mainthm2}. 
\begin{quR} To what extent can the approach used above apply to more groups? For example, taking a group $G:=F\wr T$ where $|F|<\infty$ and $T$ is torsion free (possibly $\Z^n$ for some $n \in \N$) can one state that the base of $G$ is negligible in $G$?
\end{quR}
\begin{quR} Given a finitely generated group $H$, is the base of $G:=H\wr \Z$ negligible in $G$? Moreover, what if $\Z$ is replaced with another finitely generated infinite group?
\end{quR}
\bibliographystyle{amsalpha}
\def\cprime{$'$}
\providecommand{\bysame}{\leavevmode\hbox to3em{\hrulefill}\thinspace}
\providecommand{\MR}{\relax\ifhmode\unskip\space\fi MR }
\providecommand{\MRhref}[2]{%
  \href{http://www.ams.org/mathscinet-getitem?mr=#1}{#2}
}
\providecommand{\href}[2]{#2}
\end{document}